\renewcommand{\d}{{\mathrm d}}
\newcommand{\Li}{\operatorname{Li}}
\begin{document}

\newcommand{\arXivNumber}{1801.09895}

\renewcommand{\thefootnote}{}

\renewcommand{\PaperNumber}{028}

\FirstPageHeading

\ShortArticleName{One of the Odd Zeta Values from $\zeta(5)$ to $\zeta(25)$ Is Irrational}

\ArticleName{One of the Odd Zeta Values\\ from $\boldsymbol{\zeta(5)}$ to $\boldsymbol{\zeta(25)}$ Is Irrational.\\ By Elementary Means\footnote{This paper is a~contribution to the Special Issue on Orthogonal Polynomials, Special Functions and Applications (OPSFA14). The full collection is available at \href{https://www.emis.de/journals/SIGMA/OPSFA2017.html}{https://www.emis.de/journals/SIGMA/OPSFA2017.html}}}

\Author{Wadim ZUDILIN}
\AuthorNameForHeading{W.~Zudilin}

\Address{Department of Mathematics, IMAPP, Radboud University,\\
PO Box 9010, 6500~GL Nijmegen, The Netherlands}
\Email{\href{mailto:w.zudilin@math.ru.nl}{w.zudilin@math.ru.nl}}
\URLaddress{\url{http://www.math.ru.nl/~wzudilin/}}

\ArticleDates{Received January 31, 2018, in final form March 26, 2018; Published online March 29, 2018}

\Abstract{Available proofs of result of the type `at least one of the odd zeta values $\zeta(5),\zeta(7),\dots,\zeta(s)$ is irrational'
make use of the saddle-point method or of linear independence criteria, or both. These two remarkable techniques are however counted as highly non-elementary, therefore leaving the partial irrationality result inaccessible to general mathe\-matics audience in all its glory. Here we modify the original construction of linear forms in odd zeta values to produce, for the first time, an elementary proof of such a result~--- a~proof whose technical ingredients are limited to the prime number theorem and Stirling's approximation formula for the factorial.}

\Keywords{irrationality; zeta value; hypergeometric series}
\Classification{11J72; 11M06; 33C20}

\renewcommand{\thefootnote}{\arabic{footnote}}
\setcounter{footnote}{0}

\section{Introduction}\label{s1}

Without touching deeply a history of the question (see \cite{Fi04} for an excellent account of this), we notice that the irrationality of the \emph{zeta values} --- values of Riemann's zeta function
\begin{gather*}
\zeta(s)=\sum_{n=1}^\infty\frac1{n^s}
\end{gather*}
at integers $s=2,3,\dots$, is known for even $s$ and also for $s=3$, while there are only partial results in this direction for odd $s\ge5$.
A starting point here has been set in the work \cite{BR01} with further development, particularly focusing on $\zeta(5)$, in~\cite{Ri02} and~\cite{Zu04}.

We fix an odd integer $s\ge7$. Our strategy is constructing two sequences of linear forms~$r_n$ and~$\hat r_n$ living in the $\mathbb Q$-space $\mathbb Q+\mathbb Q\zeta(3)+\mathbb Q\zeta(5)+\dots+\mathbb Q\zeta(s)$, for which we have a control of the common denominators $\lambda_n$ of rational coefficients and an elementary access to their asymptotic behaviour as $n\to\infty$; more importantly, the two coefficients of $\zeta(3)$ in these forms are proportional (with factor $7$), so that $7r_n-\hat r_n$ belongs to the space $\mathbb Q+\mathbb Q\zeta(5)+\dots+\mathbb Q\zeta(s)$. Finally, using $7r_n-\hat r_n>0$ and the asymptotics $\lambda_n(7r_n-\hat r_n)\to0$ as $n\to\infty$ of the linear forms
\begin{gather*}
\lambda_n(7r_n-\hat r_n)\in\mathbb Z+\mathbb Z\zeta(5)+\mathbb Z\zeta(7)+\dots+\mathbb Z\zeta(s)
\end{gather*}
when $s=25$, we conclude that it cannot happen that all the quantities $\zeta(5),\zeta(7),\dots,\allowbreak\zeta(25)$ are rational.

The original idea of using the so-called well-poised hypergeometric series to construct linear forms in zeta values of a given parity is due
to Ball and Rivoal~\cite{BR01}; our new ingredient here is using simultaneously such series and their `twists by half', for an appropriate choice of the parameters. More precisely, our hypergeometric series assume the form
\begin{gather}
r_n=\sum_{\nu=1}^\infty R_n(\nu) \qquad\text{and}\qquad \hat r_n=\sum_{\nu=1}^\infty R_n\big(\nu-\tfrac12\big),
\label{e01}
\end{gather}
where the rational-function summand $R_n(t)$ is defined as follows:
\begin{align}
R(t)=R_n(t)
&=\frac{n!^{s-5}\prod_{j=1}^n(t-j)\cdot\prod_{j=1}^n(t+n+j)\cdot2^{6n}\prod_{j=1}^{3n}\big(t-n-\frac12+j\big)} {\prod_{j=0}^n(t+j)^s}\nonumber\\
&=\frac{2^{6n}n!^{s-5}\prod_{j=0}^{6n}\big(t-n+\frac12j\big)}{\prod_{j=0}^n(t+j)^{s+1}}.\label{e02}
\end{align}

The following Sections~\ref{s2} and~\ref{s3} discuss, respectively, the arithmetic and analysis of the forms~\eqref{e01}. In Section~\ref{s4}
we use this information to conclude with the proof of the claimed result and make some relevant comments.

\section{Arithmetic ingredients}\label{s2}

The notation $d_n$ will be used for the least common multiple of $1,2,\dots,n$. Recall that the prime number theorem is equivalent to the asymptotics
\begin{gather}
\lim_{n\to\infty}d_n^{1/n}=e.\label{e03}
\end{gather}

A rational function $S(t)$ of the form
\begin{gather*}
S(t)=\frac{P(t)}{(t-t_1)^{s_1}(t-t_2)^{s_2}\dotsb(t-t_q)^{s_q}},
\end{gather*}
whose denominator has degree larger than its numerator, possesses a unique partial-fraction decomposition
\begin{gather*}
S(t)=\sum_{j=1}^q\sum_{i=1}^{s_j}\frac{b_{i,j}}{(t-t_j)^i}.
\end{gather*}
The coefficients here can be computed on the basis of explicit formula
\begin{gather*}
b_{i,j}=\frac1{(s_j-i)!} \bigl(S(t)(t-t_j)^{s_j}\bigr)^{(s_j-i)}\Big|_{t=t_j}
\end{gather*}
for all $i$, $j$ in question. This procedure can be illustrated on the following examples, in which all the exponents $s_j$ are equal to~1:
\begin{gather*}
\frac{n!}{\prod_{j=0}^n(t+j)}
=\sum_{k=0}^n\frac{(-1)^k\binom nk}{t+k},
\displaybreak[2]\\
\frac{\prod_{j=1}^n(t-j)}{\prod_{j=0}^n(t+j)}
=\sum_{k=0}^n\frac{(-1)^{n+k}\binom{n+k}n\binom nk}{t+k},
\displaybreak[2]\\
\frac{\prod_{j=1}^n(t+n+j)}{\prod_{j=0}^n(t+j)}
=\sum_{k=0}^n\frac{(-1)^k\binom{2n-k}n\binom nk}{t+k},
\displaybreak[2]\\
\frac{2^{2n}\prod_{j=1}^n(t+\frac12-j)}{\prod_{j=0}^n(t+j)}
=\sum_{k=0}^n\frac{(-1)^{n+k}\binom{2n+2k}{2n}\binom{2n}{n+k}}{t+k},
\displaybreak[2]\\
\frac{2^{2n}\prod_{j=1}^n(t-\frac12+j)}{\prod_{j=0}^n(t+j)}
=\sum_{k=0}^n\frac{\binom{2k}k\binom{2n-2k}{n-k}}{t+k},
\displaybreak[2]\\
\frac{2^{2n}\prod_{j=1}^n(t+n-\frac12+j)}{\prod_{j=0}^n(t+j)}
=\sum_{k=0}^n\frac{(-1)^k\binom{4n-2k}{2n}\binom{2n}k}{t+k}.
\end{gather*}
It also means that the function $R(t)$ in \eqref{e02} can be written as
\begin{gather}
R(t)=\sum_{i=1}^s\sum_{k=0}^n\frac{a_{i,k}}{(t+k)^i}\label{e04}
\end{gather}
with the recipe to compute the coefficients $a_{i,k}$ in its partial-fraction decomposition. At the same time, the function $R(t)$ is a product of `simpler' rational functions given above, with all coefficients of their partial fractions being integral.

\begin{lemma}\label{l1}
Let $k_1,\dots,k_q$ be pairwise distinct numbers from the set $\{0,1,\dots,n\}$ and $s_1,\dots,s_q$ positive integers. Then the coefficients in the expansion
\begin{gather*}
\frac1{\prod_{j=1}^q(t+k_j)^{s_j}} =\sum_{j=1}^q\sum_{i=1}^{s_j}\frac{b_{i,j}}{(t+k_j)^i}
\end{gather*}
satisfy
\begin{gather} d_n^{s-i}b_{i,j}\in\mathbb Z, \qquad\text{where}\quad i=1,\dots,s_j \quad \text{and}\quad j=1,\dots,q, \label{e05}
\end{gather}
where $s=s_1+\dots+s_q$.

In particular,
\begin{gather}
d_n^{s-i}a_{i,k}\in\mathbb Z, \qquad\text{where}\quad i=1,\dots,s \quad \text{and}\quad k=0,1,\dots,n, \label{e06}
\end{gather}
for the coefficients in~\eqref{e04}.
\end{lemma}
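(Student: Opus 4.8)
The plan is to read off \eqref{e05} directly from the explicit partial-fraction formula quoted just above the lemma, applied to the rational function $S(t)=1/\prod_{l=1}^q(t+k_l)^{s_l}$. Fixing an index $j$, that formula gives
\[
b_{i,j}=\frac1{(s_j-i)!}\,\biggl(\prod_{l\ne j}(t+k_l)^{-s_l}\biggr)^{(s_j-i)}\Big|_{t=-k_j},
\]
so the whole task reduces to differentiating a product of pure powers $m:=s_j-i$ times and evaluating at $t=-k_j$. First I would record the elementary identity
\[
\frac{\d^m}{\d t^m}(t+k_l)^{-s_l}=(-1)^m\,m!\,\binom{s_l+m-1}{m}(t+k_l)^{-s_l-m}
\]
and apply it, through the general Leibniz rule, to the $(s_j-i)$-th derivative of $\prod_{l\ne j}(t+k_l)^{-s_l}$. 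The multinomial coefficient $m!/\prod_{l\ne j}m_l!$ then cancels both the outer prefactor $1/(s_j-i)!=1/m!$ and the individual $m_l!$ produced by the derivatives, leaving
\[
b_{i,j}=\sum_{\substack{m_l\ge0\ (l\ne j)\\ \sum_{l\ne j}m_l=s_j-i}}\ \prod_{l\ne j}(-1)^{m_l}\binom{s_l+m_l-1}{m_l}\cdot\frac1{\prod_{l\ne j}(k_l-k_j)^{s_l+m_l}}.
\]
Since the binomial coefficients are integers, and $k_l,k_j\in\{0,1,\dots,n\}$ with $k_l\ne k_j$ forces $1\le|k_l-k_j|\le n$ so that every difference $k_l-k_j$ divides $d_n$, multiplying a single summand by $d_n$ raised to the power $\sum_{l\ne j}(s_l+m_l)=(s-s_j)+(s_j-i)=s-i$ already produces an integer; this exponent is the same for all summands, whence $d_n^{\,s-i}b_{i,j}\in\mathbb Z$, which is \eqref{e05}.

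For the statement \eqref{e06} I would first write the function $R(t)$ of \eqref{e02} as a product of exactly $s$ of the elementary rational functions displayed just before the lemma: the factor $n!^{s-5}$ supplies $s-5$ copies of $n!/\prod_{j=0}^n(t+j)$, the numerators $\prod_{j=1}^n(t-j)$ and $\prod_{j=1}^n(t+n+j)$ supply one factor each, and, writing $2^{6n}=(2^{2n})^3$ and splitting $\prod_{j=1}^{3n}(t-n-\tfrac12+j)$ into three blocks of $n$ consecutive half-integer shifts, one recognizes the three remaining listed functions with half-integer numerators --- $s$ factors altogether, each contributing exactly one copy of $\prod_{j=0}^n(t+j)$ to the denominator. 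Each of these factors has a partial-fraction expansion of the shape $\sum_{k=0}^n\beta_k/(t+k)$ with $\beta_k\in\mathbb Z$, so expanding the product term by term exhibits $R(t)$ as a $\mathbb Z$-linear combination of rational functions $1/\prod_{\ell=1}^s(t+k_\ell)$ with $k_1,\dots,k_s\in\{0,1,\dots,n\}$, not necessarily distinct. Collecting equal values among the $k_\ell$ turns each such function into one covered by \eqref{e05} with total degree $s$, and summing the corresponding estimates with integer weights yields $d_n^{\,s-i}a_{i,k}\in\mathbb Z$.

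The only point that really demands care is the Leibniz-rule step: one has to keep track of the factorials closely enough to be certain that the multinomial coefficient, the prefactors $m_l!$ generated by the $m_l$-th derivatives, and the overall $1/(s_j-i)!$ combine so as to leave precisely a product of integral binomial coefficients, and then the index bookkeeping $\sum_{l\ne j}(s_l+m_l)=s-i$ must be carried out correctly, since it is exactly this count that pins down the power of $d_n$. Everything else is routine; it is also worth stressing that the present section is purely arithmetic --- the prime number theorem, in the equivalent form \eqref{e03}, is not needed until these integrality statements are combined with the growth estimates of the following sections.
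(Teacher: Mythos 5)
Your argument is correct and follows essentially the same route as the paper: the same Leibniz/multinomial differentiation of $\prod_{l\ne j}(t+k_l)^{-s_l}$ yielding the explicit formula for $b_{i,j}$, the same count $\sum_{l\ne j}(s_l+m_l)=s-i$ combined with $d_n/(k_l-k_j)\in\mathbb Z$, and the same reduction of \eqref{e06} to \eqref{e05} by factoring $R(t)$ into $s$ of the listed elementary fractions with integer partial-fraction coefficients. Your write-up merely makes explicit the factorization ($s-5$ copies of $n!/\prod(t+j)$ plus the two integer-shift factors plus the three half-integer blocks) that the paper leaves as a one-line remark, which is a welcome detail but not a different proof.
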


\begin{proof} Denote the rational function in question by $S(t)$. The statement is trivially true when $q=1$, therefore we assume that $q\ge2$. In view of the symmetry of the data, it is sufficient to demonstrate the inclusions \eqref{e05} for $j=1$. Differentiating a related product $m$ times, for any $m\ge0$, we obtain
\begin{align*}
\frac1{m!}\bigl(S(t)(t+k_1)^{s_1}\bigr)^{(m)}
&=\frac1{m!}\left(\prod_{j=2}^q(t+k_j)^{-s_j}\right)^{(m)}
=\sum_{\substack{\ell_2,\dots,\ell_q\ge0\\ \ell_2+\dots+\ell_q=m}}\prod_{j=2}^q\frac1{\ell_j!}\bigl((t+k_j)^{-s_j}\bigr)^{(\ell_j)}\\
&=\sum_{\substack{\ell_2,\dots,\ell_q\ge0\\ \ell_2+\dots+\ell_q=m}}\prod_{j=2}^q(-1)^{\ell_j}\binom{s_j+\ell_j-1}{\ell_j}(t+k_j)^{-(s_j+\ell_j)}.
\end{align*}
This implies that
\begin{gather*}
b_{i,1}=\sum_{\substack{\ell_2,\dots,\ell_q\ge0\\ \ell_2+\dots+\ell_q=s_1-i}}
\prod_{j=2}^q(-1)^{\ell_j}\binom{s_j+\ell_j-1}{\ell_j}\frac1{(k_j-k_1)^{s_j+\ell_j}}
\end{gather*}
for $i=1,\dots,s_1$. Using $d_n/(k_j-k_1)\in\mathbb Z$ for $j=2,\dots,q$ and $\sum_{j=2}^q(s_j+\ell_j)=s-i$ for each individual summand, we deduce the desired inclusion in \eqref{e05} for $j=1$, hence for any~$j$.

The second claim in the lemma follows from considering $R(t)$ as a product of the `simpler' rational functions.
\end{proof}

\begin{lemma}\label{l2} For the coefficients $a_{i,k}$ in \eqref{e04}, we have
\begin{gather*}
a_{i,k}=(-1)^{i-1}a_{i,n-k} \qquad\text{for}\quad k=0,1,\dots,n \quad\text{and}\quad i=1,\dots,s,
\end{gather*}
so that
\begin{gather*}
\sum_{k=0}^na_{i,k}=0 \qquad\text{for} \ i \ \text{even}.
\end{gather*}
\end{lemma}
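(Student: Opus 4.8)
The plan is to deduce both assertions from a single symmetry of the rational function $R(t)$, namely the functional equation $R(-n-t)=-R(t)$, combined with the uniqueness of the partial-fraction decomposition already exploited in \eqref{e04}.

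First I would verify $R(-n-t)=-R(t)$ directly from the second expression for $R(t)$ in \eqref{e02}. Under $t\mapsto-n-t$ the denominator $\prod_{j=0}^n(t+j)^{s+1}$ becomes $\prod_{j=0}^n\bigl(-(t+n-j)\bigr)^{s+1}=(-1)^{(n+1)(s+1)}\prod_{j=0}^n(t+n-j)^{s+1}$; since $s$ is odd the exponent $s+1$ is even, so the sign is $+1$, and re-indexing $j\mapsto n-j$ (a bijection of $\{0,1,\dots,n\}$) shows the denominator is unchanged. Under the same substitution the numerator $2^{6n}n!^{s-5}\prod_{j=0}^{6n}\bigl(t-n+\frac12j\bigr)$ turns into $2^{6n}n!^{s-5}\prod_{j=0}^{6n}\bigl(-(t+2n-\frac12j)\bigr)$, producing a global factor $(-1)^{6n+1}=-1$; re-indexing $j\mapsto 6n-j$ sends $t+2n-\frac12j$ back to $t-n+\frac12j$, so the numerator is exactly negated. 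Dividing, $R(-n-t)=-R(t)$. (The same check can be run on the first line of \eqref{e02}: there $\prod_{j=1}^n(t-j)$ and $\prod_{j=1}^n(t+n+j)$ get interchanged up to signs, $\prod_{j=1}^{3n}(t-n-\frac12+j)$ is fixed up to a sign, and the denominator contributes $(-1)^{(n+1)s}$, the odd parity of $s$ again being what makes everything combine to $-R(t)$.)

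Second, I would substitute $t\mapsto-n-t$ into the expansion \eqref{e04}. Using $(-n-t+k)^i=(-1)^i(t+n-k)^i$ and then re-indexing the inner sum by $k\mapsto n-k$ gives
\begin{gather*}
R(-n-t)=\sum_{i=1}^s\sum_{k=0}^n\frac{(-1)^ia_{i,n-k}}{(t+k)^i},
\end{gather*}
while the functional equation says this also equals $-R(t)=-\sum_{i=1}^s\sum_{k=0}^na_{i,k}/(t+k)^i$. Since $R(t)$ is a proper rational function with poles only at $t=-k$, $k=0,\dots,n$, its partial-fraction decomposition is unique, so comparing coefficients of $(t+k)^{-i}$ yields $(-1)^ia_{i,n-k}=-a_{i,k}$, i.e.\ $a_{i,k}=(-1)^{i-1}a_{i,n-k}$. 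Finally, for even $i$ this reads $a_{i,k}=-a_{i,n-k}$, and summing over $k$ (and re-indexing the right-hand side) gives $\sum_{k=0}^na_{i,k}=-\sum_{k=0}^na_{i,k}$, hence $\sum_{k=0}^na_{i,k}=0$.

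The only delicate point is the sign bookkeeping in the first step — keeping track that the numerator carries an odd number $6n+1$ of factors while the denominator contributes $(-1)^{(n+1)(s+1)}=1$ precisely because $s$ is odd. Once $R(-n-t)=-R(t)$ is established, everything else is a purely formal consequence of the uniqueness of partial fractions.
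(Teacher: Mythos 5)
Your proof is correct and follows the same route as the paper: establish the well-poised symmetry $R(-t-n)=-R(t)$ (which the paper asserts and you verify in detail, with the sign bookkeeping done correctly), then substitute into \eqref{e04} and invoke uniqueness of the partial-fraction decomposition, with the vanishing of $\sum_{k}a_{i,k}$ for even $i$ following by summing the symmetry relation.
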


\begin{proof}
Since $s$ is odd, the function \eqref{e02} possesses the following (well-poised) symmetry: $R(-t-n)=-R(t)$. Substitution of the relation into~\eqref{e04} results in
\begin{align*}
-\sum_{i=1}^s\sum_{k=0}^n\frac{a_{i,k}}{(t+k)^i}
&=\sum_{i=1}^s\sum_{k=0}^n\frac{a_{i,k}}{(-t-n+k)^i}=\sum_{i=1}^s(-1)^i\sum_{k=0}^n\frac{a_{i,k}}{(t+n-k)^i}\\
&=\sum_{i=1}^s(-1)^i\sum_{k=0}^n\frac{a_{i,n-k}}{(t+k)^i},
\end{align*}
and the identities in the lemma follow from the uniqueness of decomposition into partial fractions. The second statement follows from
\begin{gather*}
\sum_{k=0}^na_{i,k}=(-1)^{i-1}\sum_{k=0}^na_{i,n-k}=(-1)^{i-1}\sum_{k=0}^na_{i,k}.\tag*{\qed}
\end{gather*}
\renewcommand{\qed}{}
\end{proof}

\begin{lemma}\label{l3} For each $n$,
\begin{gather*}
r_n=\sum_{\substack{i=2\\i\;\text{odd}}}^sa_i\zeta(i)+a_0
\qquad\text{and}\qquad
\hat r_n=\sum_{\substack{i=2\\i\;\text{odd}}}^sa_i\big(2^i-1\big)\zeta(i)+\hat a_0,
\end{gather*}
with the following inclusions available\textup:
\begin{gather*}
d_n^{s-i}a_i\in\mathbb Z \qquad\text{for}\quad i=3,5,\dots,s,
\quad\text{and}\quad d_n^sa_0, d_n^s\hat a_0\in\mathbb Z.
\end{gather*}
\end{lemma}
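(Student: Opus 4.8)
My plan is to substitute the partial-fraction expansion \eqref{e04} of $R(t)$ into \eqref{e01} and evaluate the resulting sums over $\nu$. From the second form in \eqref{e02}, $R(t)$ is a ratio of a polynomial of degree $6n+1$ over one of degree $(n+1)(s+1)\ge 8(n+1)>6n+3$, so $R(t)=O(t^{-2})$; this justifies interchanging $\sum_\nu$ with the finite double sum in \eqref{e04}, and it forces the residue of $R$ at infinity to vanish, i.e.\ $\sum_{k=0}^n a_{1,k}=0$. For the terms with $i\ge2$ I use
\begin{gather*}
\sum_{\nu=1}^\infty\frac1{(\nu+k)^i}=\zeta(i)-\sum_{m=1}^k\frac1{m^i},
\qquad
\sum_{\nu=1}^\infty\frac1{(\nu-\tfrac12+k)^i}=(2^i-1)\zeta(i)-2^i\sum_{\mu=1}^k\frac1{(2\mu-1)^i},
\end{gather*}
the second identity coming from $\zeta(i,\tfrac12)=\sum_{l\ge0}(l+\tfrac12)^{-i}=(2^i-1)\zeta(i)$ and $\zeta(i,k+\tfrac12)=\zeta(i,\tfrac12)-\sum_{l=0}^{k-1}(l+\tfrac12)^{-i}$. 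The terms with $i=1$ are treated after subtracting $1/\nu$, resp.\ $2/(2\nu-1)$, inside $\sum_k a_{1,k}/(\nu+k)$: the correction is $O(\nu^{-2})$ because $\sum_k a_{1,k}=0$, so the series converges absolutely and telescopes to $-\sum_k a_{1,k}\sum_{m=1}^k 1/m$, resp.\ its odd analogue. Assembling the pieces and using Lemma~\ref{l2} to kill all even $\zeta$-values (in particular $\zeta(2)$) yields the two claimed shapes with $a_i=\sum_{k=0}^n a_{i,k}$ and
\begin{gather*}
a_0=-\sum_{i=1}^s\sum_{k=0}^n a_{i,k}\sum_{m=1}^k\frac1{m^i},
\qquad
\hat a_0=-\sum_{i=1}^s\sum_{k=0}^n a_{i,k}\cdot 2^i\sum_{\mu=1}^k\frac1{(2\mu-1)^i}.
\end{gather*}

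For the arithmetic, $d_n^{s-i}a_i\in\mathbb Z$ is immediate by summing the inclusions $d_n^{s-i}a_{i,k}\in\mathbb Z$ of Lemma~\ref{l1} (relation \eqref{e06}) over $k$. For $a_0$: since $d_n/m\in\mathbb Z$ whenever $1\le m\le k\le n$ one has $d_n^i\sum_{m=1}^k m^{-i}\in\mathbb Z$, so every summand of $a_0$ is cleared by $d_n^{s-i}\cdot d_n^i=d_n^s$, giving $d_n^s a_0\in\mathbb Z$.

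The inclusion $d_n^s\hat a_0\in\mathbb Z$ is the step I expect to be the real obstacle: the tails $2^i\sum_{\mu=1}^k(2\mu-1)^{-i}$ run over the odd integers $1,3,\dots,2k-1$, which reach $2n-1$, and $d_n$ does not on its own absorb an odd prime $p$ with $n<p<2n$. The plan here is to write $2^i\sum_{\mu=1}^k(2\mu-1)^{-i}=2^i\sum_{l=1}^{2k-1}l^{-i}-\sum_{l=1}^{k-1}l^{-i}$, so that only $2^i\sum_{l=1}^{2k-1}l^{-i}$ carries the dangerous primes, and then to show that such a prime $p$ cancels out of $\hat a_0$ after multiplication by $d_n^s$, by exploiting the well-poised structure of $R$: the symmetry $a_{i,k}=(-1)^{i-1}a_{i,n-k}$ of Lemma~\ref{l2}, which mirrors the offending index range $k\ge(p+1)/2$ onto the lower half; the extra relations among the $a_{i,k}$ coming from the rapid decay of $R$ at infinity; and the integrality of the partial fractions of the ``simpler'' factors of $R$ — one of them, $2^{2n}\prod_{j=1}^n(t-\tfrac12+j)\big/\prod_{j=0}^n(t+j)$, carries a factor $\binom{2k}{k}$, and the elementary divisibility $\operatorname{lcm}(1,3,\dots,2k-1)\mid d_k\binom{2k}{k}$ (checked primewise, a prime in $(k,2k]$ dividing $\binom{2k}{k}$ exactly once) supplies the needed compensation. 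Combining these ingredients is the heart of the matter; once $d_n^s\hat a_0\in\mathbb Z$ is established, the lemma follows.
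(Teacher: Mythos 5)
Your treatment of $r_n$, the elimination of the even zeta values via Lemma~\ref{l2}, and the inclusions $d_n^{s-i}a_i\in\mathbb Z$ and $d_n^s a_0\in\mathbb Z$ all match the paper's argument (the paper handles the $i=1$ terms with an auxiliary parameter $z\to1^-$ instead of your subtraction of $1/\nu$, but that is a cosmetic difference). However, the step you yourself flag as the obstacle --- $d_n^s\hat a_0\in\mathbb Z$ --- is exactly the point of the lemma, and your proposal does not prove it: it only lists ingredients (the symmetry $a_{i,k}=(-1)^{i-1}a_{i,n-k}$, the vanishing of $\sum_k a_{1,k}$, and a divisibility of the type $\operatorname{lcm}(1,3,\dots,2k-1)\mid d_k\binom{2k}k$) and hopes they combine. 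That hope is not justified as stated: the factor $\binom{2k}k$ from the ``simpler'' function $2^{2n}\prod_{j=1}^n\big(t-\frac12+j\big)\big/\prod_{j=0}^n(t+j)$ appears cleanly only in the leading coefficients $a_{s,k}$ (the product of residues at $t=-k$); the lower coefficients $a_{i,k}$ with $i<s$ are sums of products involving derivatives of the other factors and carry no visible $\binom{2k}k$, so a prime $p$ with $n<p\le 2k-1$ occurring in your tails $2^i\sum_{\mu=1}^k(2\mu-1)^{-i}$ is not removed by any argument you have written down. In short, the decisive arithmetic claim is left unproven.

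The paper avoids this difficulty at the source rather than repairing it afterwards: since $R(t)$ in \eqref{e02} vanishes at the half-integers $t=-\frac12,-\frac32,\dots,-n+\frac12$, one may start the summation defining $\hat r_n$ at $t=-m-\frac12$ with $m=\big\lfloor\frac{n-1}2\big\rfloor$ instead of at $t=\frac12$. Splitting the resulting sums according to $k\le m$ and $k\ge m+1$, the zeta-part again produces $(2^i-1)\zeta(i)\sum_k a_{i,k}$, while the leftover finite sums are
\begin{gather*}
\sum_{\ell=0}^{m-k}\frac{(-1)^i}{\big(\ell+\frac12\big)^i}\quad(0\le k\le m)
\qquad\text{and}\qquad
\sum_{\ell=1}^{k-m-1}\frac1{\big(\ell-\frac12\big)^i}\quad(m+1\le k\le n),
\end{gather*}
whose denominators have odd parts $2\ell+1\le 2m+1\le n$ and $2\ell-1\le n-1$ respectively; hence $d_n^i$ (resp.\ $d_{n-1}^i$) clears them, and combined with \eqref{e06} this gives $d_n^s\hat a_0\in\mathbb Z$ at once. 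Your expression for $\hat a_0$ is of course numerically the same, but without this shift of the summation range (or some genuinely new cancellation argument, which you have not supplied) the claimed integrality does not follow, so the proof as proposed is incomplete at its central step.
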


Notice that
\begin{gather*}
\big(2^i-1\big)\zeta(i) =\sum_{\ell=1}^\infty\frac1{\big(\ell-\frac12\big)^i}
\end{gather*}
for $i\ge2$.

\begin{proof}
Our strategy here is to write the series in \eqref{e01} using the partial-fraction decomposition~\eqref{e04} of $R(t)$. To treat the first sum $r_n$ we additionally introduce an auxiliary parameter $z>0$, which we later specialise to $z=1$:
\begin{align*}
r_n(z) &=\sum_{\nu=1}^\infty R_n(\nu)z^\nu
=\sum_{\nu=1}^\infty\sum_{i=1}^s\sum_{k=0}^n\frac{a_{i,k}z^\nu}{(\nu+k)^i}\\
&=\sum_{i=1}^s\sum_{k=0}^na_{i,k}z^{-k}\sum_{\nu=1}^\infty\frac{z^{\nu+k}}{(\nu+k)^i}
=\sum_{i=1}^s\sum_{k=0}^na_{i,k}z^{-k}\left(\Li_i(z)-\sum_{\ell=1}^k\frac{z^\ell}{\ell^i}\right)\\
&=\sum_{i=1}^s\Li_i(z)\sum_{k=0}^na_{i,k}z^{-k}-\sum_{i=1}^s\sum_{k=0}^n\sum_{\ell=1}^k\frac{a_{i,k}z^{-(k-\ell)}}{\ell^i},
\end{align*}
where
\begin{gather*}
\Li_i(z)=\sum_{\ell=1}^\infty\frac{z^\ell}{\ell^i}
\end{gather*}
for $i=1,\dots,s$ are the polylogarithmic functions. The latter are well defined at $z=1$ for $i\ge2$, where $\Li_i(1)=\zeta(i)$, while $\Li_1(z)=-\log(1-z)$ does not have a limit as $z\to1^-$. By taking the limit as $z\to1^-$ in the above derivation and using $R_n(\nu)=O\big(\nu^{-2}\big)$ as $\nu\to\infty$, we conclude that
\begin{gather*}
\sum_{k=0}^na_{1,k}=\lim_{z\to1^-}\sum_{k=0}^na_{1,k}z^{-k}=0,
\end{gather*}
and
\begin{gather}
r_n=\sum_{i=2}^s\zeta(i)\sum_{k=0}^na_{i,k}-\sum_{i=1}^s\sum_{k=0}^na_{i,k}\sum_{\ell=1}^k\frac1{\ell^i}.\label{e07}
\end{gather}

We proceed similarly for $\hat r_n$, omitting introduction of the auxiliary parameter $z$. Since $R(t)$ in \eqref{e02} vanishes at $t=-\frac12,-\frac32,\dots,-n+\frac12$, we can shift the starting point of summation for $\hat r_n$ to $t=-m-\frac12$, where $m=\big\lfloor\frac{n-1}2\big\rfloor$, so that
\begin{align}
\hat r_n &=\sum_{\nu=-m}^\infty R_n\big(\nu-\tfrac12\big) =\sum_{\nu=-m}^\infty\sum_{i=1}^s\sum_{k=0}^n\frac{a_{i,k}}{\big(\nu+k-\frac12\big)^i}
\nonumber\\
&=\sum_{i=1}^s\sum_{k=0}^na_{i,k}\sum_{\nu=-m}^\infty\frac1{\big(\nu+k-\frac12\big)^i} \displaybreak[2]\nonumber\\
&=\sum_{i=1}^s\sum_{k=0}^ma_{i,k}\sum_{\nu=-m}^\infty\frac1{\big(\nu+k-\frac12\big)^i}
+\sum_{i=1}^s\sum_{k=m+1}^na_{i,k}\sum_{\nu=-m}^\infty\frac1{\big(\nu+k-\frac12\big)^i} \displaybreak[2]\nonumber\\
&=\sum_{i=1}^s\sum_{k=0}^ma_{i,k}\left(\sum_{\ell=k-m}^0\frac1{\big(\ell-\frac12\big)^i}+\sum_{\ell=1}^\infty\frac1{\big(\ell-\frac12\big)^i}\right)
\nonumber\\ &\quad{} +\sum_{i=1}^s\sum_{k=m+1}^na_{i,k}\left(\sum_{\ell=1}^\infty\frac1{\big(\ell-\frac12\big)^i}-\sum_{\ell=1}^{k-m-1} \frac1{\big(\ell-\frac12\big)^i}\right)\displaybreak[2]\nonumber\\
&=\sum_{i=2}^s(2^i-1)\zeta(i)\sum_{k=0}^na_{i,k} +\sum_{i=1}^s\sum_{k=0}^ma_{i,k}\sum_{\ell=0}^{m-k}\frac{(-1)^i}{\big(\ell+\frac12\big)^i}
\nonumber\\ &\quad{} -\sum_{i=1}^s\sum_{k=m+1}^na_{i,k}\sum_{\ell=1}^{k-m-1}\frac1{\big(\ell-\frac12\big)^i}.\label{e08}
\end{align}
Now the statement of the lemma follows from the representations in \eqref{e07} and \eqref{e08}, Lemma~\ref{l2}, the inclusions \eqref{e06} of Lemma~\ref{l1} and
\begin{alignat*}{3}
& d_n^i\sum_{\ell=1}^k\frac1{\ell^i} \in\mathbb Z \qquad&& \text{for}\quad 0\le k\le n \quad \text{and}\quad i\ge1,&
\displaybreak[2]\\
& d_n^i\sum_{\ell=0}^{m-k}\frac{(-1)^i}{(\ell+\frac12)^i} \in\mathbb Z \qquad && \text{for}\quad 0\le k\le m \quad \text{and}\quad i\ge1,\\
& d_{n-1}^i\sum_{\ell=1}^{k-m-1}\frac1{\big(\ell-\frac12\big)^i} \in\mathbb Z \qquad && \text{for}\quad m+1\le k\le n \quad \text{and}\quad i\ge1.
\tag*{\qed}
\end{alignat*}\renewcommand{\qed}{}
\end{proof}

\section{Asymptotic behaviour}\label{s3}

In this section we make frequent use of Stirling's asymptotic formula
\begin{gather*}
n!\sim\sqrt{2\pi n}\left(\frac ne\right)^n \qquad\text{as}\quad n\to\infty,
\end{gather*}
and its corollary
\begin{gather*}
\binom{2n}n\sim\frac{2^{2n}}{\sqrt{\pi n}} \qquad\text{as}\quad n\to\infty
\end{gather*}
for the central binomial coefficients. (One may also use somewhat weaker but `more elementary' lower and upper bounds
\begin{gather*}
\int_1^n\log x\,\d x\le\log(n!)\le\int_2^{n+1}\log x\,\d x
\end{gather*}
for the factorial coming out from estimating integral sums of the logarithm function, with a~nemesis of running into more sophisticated versions for the asymptotics and inequalities below.)

Because the rational function $R_n(t)$ in \eqref{e02} vanishes at $1,2,\dots,n$ and at $\frac12,\frac32,\dots,\allowbreak n-\nobreak\frac12$,
the hypergeometric series \eqref{e01} can be alternatively written as
\begin{gather*}
r_n=\sum_{\nu=n+1}^\infty R_n(\nu)=\sum_{k=0}^\infty c_k \qquad\text{and}\qquad
\hat r_n=\sum_{\nu=n+1}^\infty R_n(\nu-\tfrac12)=\sum_{k=0}^\infty\hat c_k,
\end{gather*}
with the involved summands
\begin{gather}
c_k=R_n(n+1+k)=\frac{2^{6n}n!^{s-5}\prod_{j=0}^{6n}\big(k+1+\frac12j\big)}{\prod_{j=0}^n(n+k+1+j)^{s+1}}
=\frac{n!^{s-5}(6n+2k+2)! (n+k)!^{s+1}}{2 (2k+1)! (2n+k+1)!^{s+1}}\label{e09}
\\ \intertext{and}
\hat c_k=R_n\big(n+\tfrac12+k\big)=\frac{2^{6n}n!^{s-5}\prod_{j=0}^{6n}\big(k+\frac12+\frac12j\big)} {\prod_{j=0}^n\big(n+k+\frac12+j\big)^{s+1}}\nonumber
\end{gather}
strictly \emph{positive}. Observe that
\begin{align}
\frac{c_k}{\hat c_k}
&=\frac{\prod_{j=0}^{6n}(2k+2+j)}{\prod_{j=0}^{6n}(2k+1+j)}
\cdot\left(\prod_{j=0}^n\frac{n+k+\frac12+j}{n+k+1+j}\right)^{s+1}\nonumber\\
&=\frac{6n+2k+2}{2k+1}\cdot\left(2^{-2(n+1)}\frac{\binom{4n+2k+2}{2n+k+1}}{\binom{2n+2k}{n+k}}\right)^{s+1}\nonumber\\
&\sim\frac{6n+2k+2}{2k+1}\left(\frac{n+k}{2n+k+1}\right)^{(s+1)/2} \qquad\text{as}\quad n+k\to\infty. \label{e10}
\end{align}

\begin{lemma}\label{l4} For $s\ge7$ odd,
\begin{gather*}
\lim_{n\to\infty}r_n^{1/n}=\lim_{n\to\infty}\hat r_n^{1/n}=g(x_0)
\qquad\text{and}\qquad \lim_{n\to\infty}\frac{r_n}{\hat r_n}=1
\end{gather*}
where
\begin{gather*}
g(x)=\frac{2^6(x+3)^6(x+1)^{s+1}}{(x+2)^{2(s+1)}}
\end{gather*}
and $x_0$ is the unique positive zero of the polynomial
\begin{gather*}
x(x+2)^{(s+1)/2}-(x+3)(x+1)^{(s+1)/2}.
\end{gather*}
\end{lemma}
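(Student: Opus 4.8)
The plan is to extract the exponential size of the positive-term sums $r_n=\sum_{k\ge0}c_k$ and $\hat r_n=\sum_{k\ge0}\hat c_k$, with $c_k,\hat c_k$ as in~\eqref{e09}, from the size of their individual terms when $k$ grows linearly with~$n$. Applying Stirling's formula to each factorial in~\eqref{e09} with $k=\lfloor xn\rfloor$ for fixed $x>0$, the $n\log n$-terms cancel between numerator and denominator, leaving
\begin{gather*}
c_{\lfloor xn\rfloor}^{1/n}\longrightarrow G(x):=\frac{(2x+6)^{2x+6}(x+1)^{(s+1)(x+1)}}{(2x)^{2x}(x+2)^{(s+1)(x+2)}}
\end{gather*}
uniformly for $x$ in compact subsets of $(0,\infty)$; by~\eqref{e10} the ratio $\hat c_k/c_k$ is bounded above and below on such ranges, so $\hat c_{\lfloor xn\rfloor}^{1/n}\to G(x)$ as well. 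A routine estimate of $c_{k+1}/c_k$ (geometric decay once $k/n$ exceeds the eventual maximiser, and $c_{k+1}/c_k=1-\Theta(n)/k+O(n^2/k^2)$ for $k\gg n$, giving power-law decay) shows that the two series converge and that $r_n$ and $\hat r_n$ each lie between their largest term and a polynomial-in-$n$ multiple of it. This reduces everything to a one-variable study of $G$.

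Differentiating, $h(x):=(\log G)'(x)=2\log\frac{x+3}{x}-(s+1)\log\frac{x+2}{x+1}$, and a short rearrangement ($2x+6=2(x+3)$, $2x=2\cdot x$, then split off the $x$-linear pieces) gives the identity $\log G(x)=\log g(x)+x\,h(x)$ for all $x>0$. Now $h(x)=0$ is equivalent to $(x+3)^2(x+1)^{s+1}=x^2(x+2)^{s+1}$ and, after taking positive square roots, to $x(x+2)^{(s+1)/2}=(x+3)(x+1)^{(s+1)/2}$: the positive zeros of the polynomial of the lemma are precisely the zeros of $h$ on $(0,\infty)$. There is exactly one: $h'(x)=0$ reduces to the quadratic $(s-5)(x^2+3x)=12$, which has a single positive root $x_\ast$, while $h(0^+)=+\infty$ and $h(x)\to0^-$ as $x\to\infty$ (indeed $h(x)=(5-s)/x+O(x^{-2})$); so $h$ is positive near $0$, decreases to the negative minimum $h(x_\ast)$, and increases back up to $0$, crossing zero once, at some $x_0\in(0,x_\ast)$. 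Hence $G$ increases on $(0,x_0)$ and decreases on $(x_0,\infty)$, and since $\log G(0^+)$ is finite (because $x\,h(x)\to0$) while $\log G(x)\to-\infty$ as $x\to\infty$, $G$ attains its global maximum on $(0,\infty)$ exactly at $x_0$; and $\log G(x_0)=\log g(x_0)+x_0h(x_0)=\log g(x_0)$, i.e., $G(x_0)=g(x_0)$. In particular the polynomial of the lemma has a unique positive zero, namely~$x_0$.

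These two steps give the first two limits at once: $r_n\ge c_{\lfloor x_0n\rfloor}$ with $c_{\lfloor x_0n\rfloor}^{1/n}\to G(x_0)$ yields $\liminf r_n^{1/n}\ge G(x_0)$, while $r_n\le(\text{polynomial in }n)\cdot\max_kc_k$ together with $\max_kc_k^{1/n}\to\max_{x>0}G(x)=G(x_0)$ yields $\limsup r_n^{1/n}\le G(x_0)$; the identical argument applies to $\hat r_n$, and $G(x_0)=g(x_0)$ finishes $r_n^{1/n},\hat r_n^{1/n}\to g(x_0)$. For the ratio, fix $\varepsilon>0$. By~\eqref{e10} and the critical-point identity $(x_0+3)(x_0+1)^{(s+1)/2}=x_0(x_0+2)^{(s+1)/2}$ we get $c_k/\hat c_k\to\Phi(x):=\frac{x+3}{x}\bigl(\frac{x+1}{x+2}\bigr)^{(s+1)/2}$ uniformly on compact $x$-ranges, with $\Phi(x_0)=1$, so by continuity there is $\delta>0$ with $|\Phi(x)-1|<\varepsilon$ on $|x-x_0|<\delta$. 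Because $\max_{|x-x_0|\ge\delta}G(x)<G(x_0)$, the parts $\sum_{|k-x_0n|\ge\delta n}c_k$ and $\sum_{|k-x_0n|\ge\delta n}\hat c_k$ are exponentially smaller than $r_n$ and $\hat r_n$ respectively, while on the remaining window $c_k=(1+o(1))\Phi(k/n)\,\hat c_k$ lies in $(1-2\varepsilon,1+2\varepsilon)\cdot\hat c_k$; summing, $1-2\varepsilon\le\liminf r_n/\hat r_n\le\limsup r_n/\hat r_n\le1+2\varepsilon$, and letting $\varepsilon\to0$ gives $r_n/\hat r_n\to1$.

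The step I expect to be the main obstacle is this last one: passing from the term-by-term coincidence $c_k\sim\hat c_k$ at the peak to the global ratio $r_n/\hat r_n\to1$ without the saddle-point method. The device is a ``poor man's Laplace method'' --- isolate the window $|k-x_0n|<\delta n$ and discard the rest using only the strict separation $\max_{|x-x_0|\ge\delta}G(x)<G(x_0)$ coming out of the one-variable analysis --- and making the discarded tails \emph{provably} negligible, uniformly in the threshold, is where the care has to go. A purely computational but essential secondary point is the rearrangement $\log G=\log g+x\,h$, which is exactly what makes the maximum value collapse to the clean expression~$g(x_0)$.
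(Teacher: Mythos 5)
Your proposal is correct and follows essentially the same route as the paper: localize the positive series at the peak $k\sim x_0n$, evaluate the peak term by Stirling's formula, and transfer the analysis to $\hat c_k$ (and hence to $r_n/\hat r_n\to1$) via \eqref{e10}. The only difference is organizational: you maximize $\log G$ directly through $h=(\log G)'$, while the paper studies the term ratio $c_{k+1}/c_k\sim f(k/n)^2$ and locates the peak by $f(x_0)=1$ --- equivalent, since $h=2\log f$ --- and your $\delta n$-window (``poor man's Laplace'') argument merely spells out the tail estimates the paper compresses into its reference to de Bruijn and Ball--Rivoal.
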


\begin{proof}We have
\begin{gather}
\frac{c_{k+1}}{c_k} =\frac{\big(k+3n+\frac32\big)(k+3n+2)}{(k+1)\big(k+\frac32\big)} \left(\frac{k+n+1}{k+2n+2}\right)^{s+1} \sim f\left(\frac kn\right)^2 \qquad\text{as}\quad n+k\to\infty, \label{e11}
\end{gather}
where
\begin{gather*}
f(x)=\frac{x+3}{x}\left(\frac{x+1}{x+2}\right)^{(s+1)/2}.
\end{gather*}
For an ease of notation write $q=(s+1)/2\ge4$. Since
\begin{gather*}
\frac{f'(x)}{f(x)}=\frac1{x+3}-\frac1x+q\left(\frac1{x+1}-\frac1{x+2}\right) =\frac{(q-3)x^2+3(q-3)x-6}{x(x+1)(x+2)(x+3)}
\end{gather*}
and the quadratic polynomial in the latter numerator has a unique positive zero $x_1$, the func\-tion~$f(x)$ monotone decreases from $+\infty$ to $f(x_1)$ when $x$ ranges from~$0$ to~$x_1$ and then monotone increases from $f(x_1)$ to $f(+\infty)=1$ (not attaining the value!) when $x$ ranges from $x_1$ to $+\infty$. In particular, there is exactly one positive solution $x_0$ to $f(x)=1$. Notice that $0<x_0<1$, because $f(1)=4\cdot(2/3)^q<1$.

The information gained and asymptotics in \eqref{e11} imply that $c_{k+1}/c_k>1$ for the indices $k<x_0n-\gamma\sqrt n$ and $c_{k+1}/c_k<1$
for $k>x_0n+\gamma\sqrt n$ for an appropriate choice of $\gamma>0$ dictated by application of Stirling's formula to the factorials defining $c_k$ in~\eqref{e09} (see \cite[Section~3.4]{Br58} as well as the second proof of Lemma~3 in~\cite{BR01}). This means that the asymptotic behaviour of the sum $r_n=\sum_{k=0}^\infty c_k$ is determined by the asymptotics of $c_{k_0}$ and its neighbours $c_k$, where $k_0=k_0(n)\sim x_0n$ and $|k-k_0|\le\gamma\sqrt n$, so that
\begin{align*}
\lim_{n\to\infty}r_n^{1/n}&=\lim_{n\to\infty}c_{k_0(n)}^{1/n}\\
&=\lim_{n\to\infty}\Biggl(\left(\frac ne\right)^{(s-5)n}\left(\frac{6n+2k_0+2}e\right)^{6n+2k_0+2}\left(\frac e{2k_0+1}\right)^{2k_0+1}
\\ &\qquad\quad\times
\left(\frac{n+k_0}e\right)^{(s+1)(n+k_0)}\left(\frac e{2n+k_0+1}\right)^{(s+1)(2n+k_0+1)}\Biggr)^{1/n}\\
&=\frac{(2x_0+6)^{2x_0+6}(x_0+1)^{(s+1)(x_0+1)}}{(2x_0)^{2x_0}(x_0+2)^{(s+1)(x_0+2)}}\\
&=\frac{2^6(x_0+3)^6(x_0+1)^{s+1}}{(x_0+2)^{2(s+1)}}\cdot f(x_0)^{2x_0}=g(x_0).
\end{align*}

It now follows from \eqref{e10} that
\begin{gather}
\frac{\hat c_{k+1}}{\hat c_k} \sim\frac{c_{k+1}}{c_k} \qquad\text{as}\quad n+k\to\infty, \label{e12}
\end{gather}
so that the above analysis applies to the sum $\hat r_n=\sum_{k=0}^\infty\hat c_k$ as well, and its asymptotic behaviour is determined by the asymptotics of $\hat c_{k_0}$ and its neighbours $\hat c_k$, where $k_0=k_0(n)\sim x_0n$ and $|k-k_0|\le\hat\gamma\sqrt n$. From \eqref{e12} we deduce that the limits of $\hat c_{k_0(n)}^{1/n}$ and $c_{k_0(n)}^{1/n}$ as $n\to\infty$ coincide, hence $\hat r_n^{1/n}\to g(x_0)$ as $n\to\infty$. In addition to this, we also get
\begin{gather*}
\lim_{n\to\infty}\frac{r_n}{\hat r_n}=\lim_{n\to\infty}\frac{c_{k_0(n)}}{\hat c_{k_0(n)}}
=\lim_{n\to\infty}\frac{6n+2k_0+2}{2k_0+1}\left(\frac{n+k_0}{2n+k_0+1}\right)^{(s+1)/2}=f(x_0),
\end{gather*}
which leads to the remaining limiting relation.
\end{proof}

\section{Conclusion}\label{s4}

We choose $s=25$ and apply Lemma~\ref{l4} to find out that $7r_n-\hat r_n>0 $ for $n$ sufficiently large, and
\begin{gather*}
\lim_{n\to\infty}(7r_n-\hat r_n)^{1/n}=g(x_0)=\exp(-25.292363\dots),
\end{gather*}
where $x_0=0.00036713\dots$. Assuming that the odd zeta values from $\zeta(5)$ to $\zeta(25)$ are all rational and denoting by $a$ their common denominator, we use Lemma~\ref{l3} and the asymptotics~\eqref{e03} to conclude that the sequence of \emph{positive integers}
\begin{gather*}
ad_n^{25}(7r_n-\hat r_n)
\end{gather*}
tends to $0$ as $n\to\infty$; contradiction. Thus, at least one of the numbers $\zeta(5),\zeta(7),\dots,\allowbreak\zeta(25)$ is irrational.

Those who count the prime number theorem as insufficiently elementary may use weaker versions of \eqref{e03}, for example, $d_n<3^n$ from \cite{Ha72} and the choice $s=33$ instead, to arrive at the same conclusion (for the larger value of $s$, of course).

Finally, we remark that the novelty of eliminating an `unwanted' term of $\zeta(3)$ in linear forms in odd zeta values can be further used with the arithmetic method in \cite{Zu04} to significantly reduce the size of~$s$. Since this does not let $s$ be down to $s=9$, hence leaving the achievement `at least one of the four numbers $\zeta(5)$, $\zeta(7)$, $\zeta(9)$, $\zeta(11)$ is irrational' unchanged, we do not discuss this generalisation in greater details. We point out, however, that there are other applications of the hypergeometric `twist-by-half' idea, some discussed in the joint papers~\cite{KZ18,RZ18}, and that a far-going extension to general `twists' introduced by J.~Sprang in \cite{Sp18} leads to an elementary proof of a version of the Ball--Rivoal theorem from~\cite{BR01} as well as to a significant improvement of the latter~--- see \cite{FSZ18} for details.

\subsection*{Acknowledgements}

I thank St\'ephane Fischler, Tanguy Rivoal, Johannes Sprang and the anonymous referees for their feedback on the manuscript.

\pdfbookmark[1]{References}{ref}
\LastPageEnding

\end{document}